\newtheorem{theorem}{Theorem}[section]
\newtheorem{corollary}{Corollary}[section]
\newtheorem{proposition}{Proposition}[section]
\newtheorem{remark}{Remark}[section]
\numberwithin{equation}{section}
\begin{document}
\title{Improved Young and Heinz inequalities with the Kantorovich constant}
\author{Wenshi Liao$^a$\footnote{Corresponding author. E-mail: liaowenshi@gmail.com, jlwu678@163.com.} and Junliang Wu$^a$\\
$^a${\small College of Mathematics and Statistics, Chongqing University, Chongqing 401331, P.R. China}}

\date{}
\maketitle
{\bf Abstract.} In this article, we study the further refinements and reverses
of the Young and Heinz inequalities with the Kantorovich constant.
These modified inequalities are used to establish corresponding operator inequalities on Hilbert
space and Hilbert-Schmidt norm inequalities.
\vspace{3mm}

{\bf Keywords:} Young inequality; Arithmetic-geometric mean; Heinz mean;
Kantorovich constant; Operator inequalites
\vspace{3mm}


{\bf AMS Subject Classification :} 15A15; 15A42; 15A60; 47A30
\vspace{3mm}

\section{Introduction}
The well-known Young inequality for two scalars is the weighted arithmetic-geometric mean inequality, which was
attributed to the English mathematician William Henry Young (1863-1942). The inequality states that if $a,b>0$
and $0\le v\le1$, then
\begin{equation}
\label{eq1.1}
(1-v )a+v b\ge a^{1-v }b^v
\end{equation}
with equality if and only if $a=b$. If $v=\frac{1}{2}$, we obtain the fundamental arithmetic-geometric mean
inequality $2\sqrt{ab}\le a+b$. If $v>1$ or $v<0$, then the reverse inequality of \eqref{eq1.1}
$$(1-v )a+v b\le a^{1-v }b^v$$
holds, the proof can be found in \cite{Bakherad}.

The Heinz mean, defined as
\[
H_{v}(a,b)=\frac{a^{v}b^{1-v}+a^{1-v}b^{v}}{2}
\]
for $a,b>0$ and $0\le v\le1$, interpolates between the arithmetic mean and geometric mean.
It is easy to see that the Heinz mean is convex as a function of $v$ on the interval $[0,1]$,
attains minimum at $v=1/2$, and attains maximum at $v=0$ and $v=1$, so
\begin{equation}
\label{eq2}
\sqrt{ab}=H_{\frac{1}{2}}(a,b)\le H_{v}(a,b)\le H_{1}(a,b)=\frac{a+b}{2}.
\end{equation}
 Moreover, $H_{v}(a,b)$ is symmetric about the point $v=1/2$, that is, $H_v(a,b)=H_{1-v}(a,b)$.


Kittaneh and Manasrah \cite{Kittaneh1, Kittaneh2} improved the Young inequality \eqref{eq1.1}, and obtained the
following relations:
\begin{equation}
\label{km1}
r(\sqrt a - \sqrt b )^2\le (1 - v )a + v b-a^{1 - v }b^v \le  R(\sqrt a - \sqrt b )^2,
\end{equation}
where $a,b>0$, $v\in [0,1]$, $r=\min\{v, 1-v\}$ and $R=\max\{v, 1-v\}$.

Wu and Zhao \cite{Wu} presented further improvements of \eqref{km1} that
\begin{equation}
\label{wu1}
\left( {1 - v} \right)a + v b\ge r(\sqrt a - \sqrt b )^2+ {\rm K}(\sqrt h ,2)^{r_1}a^{1 - v}b^v,
\end{equation}
\begin{equation}
\label{wu2}
\left( {1 - v} \right)a + v b\le R(\sqrt a - \sqrt b )^2+  {\rm K}(\sqrt h ,2)^{-r_1}a^{1 - v}b^v,
\end{equation}
where $h=\frac{b}{a}$, ${\rm K}(\sqrt {h} ,2)=\frac{(\sqrt {h}+1)^2}{4\sqrt {h}}$,
 ${r_1} = \min \left\{ {2r,1 - 2r} \right\}$.
Note that ${\rm K}(t,2)=\frac{(t+1)^2}{4t}$ is the classical
 Kantorovich constant which has properties ${\rm K}(1,2)=1,$ ${\rm K}(t,2)={\rm K}\left(
{\frac{1}{t},2} \right)\ge 1(t>0)$ and ${\rm K}(t,2)$ is monotone
increasing on $[1,\infty )$ and monotone decreasing on $(0,1]$.


Recently, Zhao and Wu obtained the refinements and reverses of
 Young inequality and improved inequalities \eqref{km1} in the
following forms:

\begin{proposition}\cite{Zhao}
Let $a,b$ be two nonnegative real numbers and $v\in (0,1)$.

$\rm (I)$ If $0<v\leq \frac{1}{2}$, then
\begin{equation}
   \label{p1}
(1-v)a+v b \ge a^{1-v}b^v +v(\sqrt{a}-\sqrt{b})^2+r_1(\sqrt[4]{ab}-\sqrt{a})^2,
\end{equation}
\begin{equation}
   \label{p2}
(1-v)a+v b\le a^{1-v}b^v+(1-v)(\sqrt{a}-\sqrt{b})^2-r_1(\sqrt[4]{ab}-\sqrt{b})^2,
\end{equation}

 $\rm (II)$ if $\frac{1}{2}<v< 1$, then
 \begin{equation}
   \label{p3}
(1-v)a+v b\ge a^{1-v}b^v+(1-v)(\sqrt{a}-\sqrt{b})^2+ r_1(\sqrt[4]{ab}-\sqrt{b})^2 ,
\end{equation}
\begin{equation}
   \label{p4}
(1-v)a+v b\le a^{1-v}b^v+v(\sqrt{a}-\sqrt{b})^2-r_1(\sqrt[4]{ab}-\sqrt{a})^2,
\end{equation}
where $r=\min\{{v,1-v}\}$ and $r_1=\min\{{2r,1-2r}\}$.
\end{proposition}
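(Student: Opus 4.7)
By the symmetry $(a,b,v)\mapsto(b,a,1-v)$---which preserves $r$ and $r_1$, fixes $a^{1-v}b^v$, and maps each inequality of (II) to its counterpart in (I)---it suffices to prove Part~(I). I therefore assume $v\in(0,\tfrac12]$, so $r=v$ and $r_1=\min\{2v,1-2v\}$; the main tool in both bounds will be the Kittaneh--Manasrah inequality \eqref{km1} applied to cleverly chosen pairs with weight in $[0,1]$.

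For the lower bound \eqref{p1} I would start from the identity
\[
(1-v)a+vb=\bigl[(1-2v)a+2v\sqrt{ab}\bigr]+v\bigl(\sqrt{a}-\sqrt{b}\bigr)^2,
\]
readily verified by expanding the square. The bracketed term is a convex combination with weight $2v\in(0,1]$ whose weighted geometric mean is $a^{1-2v}(\sqrt{ab})^{2v}=a^{1-v}b^v$, so \eqref{km1} applied to $(a,\sqrt{ab})$ with weight $2v$ (together with $\min\{2v,1-2v\}=r_1$) gives
\[
(1-2v)a+2v\sqrt{ab}\ge a^{1-v}b^v+r_1\bigl(\sqrt{a}-\sqrt[4]{ab}\bigr)^2,
\]
and substituting back into the identity yields \eqref{p1}.

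For the upper bound \eqref{p2} the parallel identity
\[
(1-v)a+vb=\bigl[2(1-v)\sqrt{ab}-(1-2v)b\bigr]+(1-v)\bigl(\sqrt{a}-\sqrt{b}\bigr)^2
\]
reduces the claim to
\[
a^{1-v}b^v-2(1-v)\sqrt{ab}+(1-2v)b\ge r_1\bigl(\sqrt[4]{ab}-\sqrt{b}\bigr)^2. \qquad(\star)
\]
The step I expect to be the real obstacle is the choice of pair for \eqref{km1}: the pair $(\sqrt{ab},b)$ that would most naturally generate the correction $(\sqrt[4]{ab}-\sqrt{b})^2$ requires weight $2v-1\le 0$, outside the range of \eqref{km1}. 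My workaround is to apply \eqref{km1} instead to $(A,B):=(a^{1-v}b^v,b)$ with weight $\gamma:=\tfrac{1-2v}{2(1-v)}\in[0,\tfrac12]$; an exponent check shows that the weighted geometric mean $A^{1-\gamma}B^\gamma$ still equals $\sqrt{ab}$, and clearing the factor $2(1-v)$ delivers
\[
a^{1-v}b^v-2(1-v)\sqrt{ab}+(1-2v)b\ge (1-2v)\bigl(\sqrt{a^{1-v}b^v}-\sqrt{b}\bigr)^2.
\]
To finish $(\star)$ I would verify the elementary comparison $(\sqrt{a^{1-v}b^v}-\sqrt{b})^2\ge(\sqrt[4]{ab}-\sqrt{b})^2$: since the weight $1-v$ on $a$ in $a^{1-v}b^v$ exceeds $\tfrac12$, the point $\sqrt{a^{1-v}b^v}$ lies on the same side of $\sqrt{b}$ as $\sqrt[4]{ab}$ and is at least as far away, as seen by a one-line case split on whether $a\gtrless b$ (reducing to $(a/b)^{1/2-v}\ge1$ when $a\ge b$, and the reverse when $a\le b$). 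Combined with the trivial bound $1-2v\ge r_1$ this yields $(\star)$, hence \eqref{p2}, completing Part~(I) and therefore the proof.
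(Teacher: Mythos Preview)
Your argument is correct. The symmetry reduction and the proof of \eqref{p1} via the identity $(1-v)a+vb=(1-2v)a+2v\sqrt{ab}+v(\sqrt a-\sqrt b)^2$ followed by \eqref{km1} applied to $(a,\sqrt{ab})$ with weight $2v$ match exactly the route the paper takes (see the proof of Theorem~\ref{th21}, which establishes the stronger version with the Kantorovich factor by applying \eqref{wu1} in place of \eqref{km1} at the same spot).

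For \eqref{p2} your path diverges from the paper's. You apply \eqref{km1} to the pair $(a^{1-v}b^{v},b)$ with the weight $\gamma=\frac{1-2v}{2(1-v)}$ and then close with the monotonicity comparison $\bigl(\sqrt{a^{1-v}b^{v}}-\sqrt b\bigr)^2\ge\bigl(\sqrt[4]{ab}-\sqrt b\bigr)^2$ together with $1-2v\ge r_1$. The paper instead (in the proof of Theorem~\ref{th22}, which again proves the sharper Kantorovich version) rewrites $(1-v)(\sqrt a-\sqrt b)^2-(1-v)a-vb=(1-2v)b+2v\sqrt{ab}-2\sqrt{ab}$, applies the refinement directly to $(1-2v)b+2v\sqrt{ab}$ with the pair $(b,\sqrt{ab})$ and weight $2v$ to produce $a^{v}b^{1-v}+r_1(\sqrt[4]{ab}-\sqrt b)^2$, and finishes with the AM--GM step $a^{1-v}b^{v}+a^{v}b^{1-v}\ge 2\sqrt{ab}$. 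The paper's choice keeps the two halves of the argument perfectly parallel (the same pair $(\,\cdot\,,\sqrt{ab})$ with weight $2v$, just swapping $a\leftrightarrow b$) and avoids the auxiliary comparison; on the other hand, your route shows that the reverse inequality already follows from a single application of \eqref{km1} without invoking the Heinz-type AM--GM, at the cost of a slightly less transparent weight and an extra elementary lemma.
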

%
%
%
%

Let $\mathcal{B}(\mathcal{H})$ be the $C^\ast $-algebra of all bounded linear operators on a
complex separable Hilbert space $(\mathcal{H}, \langle \cdot,\cdot\rangle)$. $I$ stands for
the identity operator. ${\mathcal{B}}^{++}(\mathcal{H})$ denotes the cone of all positive invertible
operators on $\mathcal{H}$. As a matter of convenience, we use the following
notations to define the weighted arithmetic mean and geometric mean for operators:
\[
A\nabla _v B=(1-v)A+v B,~A\# _v B=A^{\frac{1}{2}}(A^{-\frac{1}{2}}BA^{-\frac{1}{2}})^v A^{\frac{1}{2}},~
\]
where
$A,B\in {\mathcal{B}}^{++}(\mathcal{H})$ and $v \in [0,1]$.
When $v =\frac{1}{2}$, we write  $A\nabla B$ and $A\# B$
for brevity, respectively.


An operator version of the Young inequality proved in \cite{Furuta}
says that if $A, B \in{\mathcal{B}}^{++}(\mathcal{H}) $ and $v\in[0,1]$, then
\[
A\nabla _v B\ge A\#_{v}B.
\]

The Heinz operator mean is defined by
\[H_{v}(A,B)=\frac{A\#_{v}B+A\#_{1-v}B}{2},\]
for $A,B\in {\mathcal{B}}^{++}(\mathcal{H})$ and $0\leq v\leq1$.

It is easy to see that  the Heinz operator mean interpolates
the arithmetic-geometric operator mean inequality:
\begin{equation}
   \label{heinz}
A\#B\le H_{v}(A,B)\le A\nabla B.
\end{equation}
The equation \eqref{heinz} are called the Heinz
operator inequalities (See \cite{Kittaneh3, Kittaneh4}).

The first difference-type improvement of the matrix Young inequality
 is due to Kittaneh and Manasrah
\cite{Kittaneh2} extending (1.2) to matrices:
\begin{equation}
   \label{km4}
r(A\nabla B-A\#B)\le A\nabla_v B-A\#_vB\le R(A\nabla B-A\#B)
\end{equation}
holds for positive definite matrices $A$ and $B$ and $0\leq v\leq1$,
where $r=\min\{v, 1-v\}$ and $R=\max\{v, 1-v\}$, which of course
remain valid for Hilbert space operators by a standard approximation argument.

Note that the first inequality in \eqref{km4} was independently
established for positive operators $A$ and $B$ by Furuichi in \cite{Furuichi}.

The ratio-type improvements of the Young inequality are referred to
\cite{Furuichi, Furuichi1, Tominaga, Wu, Zuo}.

The operator versions of \eqref{wu1} and \eqref{wu2} were presented in \cite{Wu}.


Zhao and Wu \cite{Zhao} also extended inequalities \eqref{p1}-\eqref{p4}
to positive invertible operators and improved \eqref{km4}, which were shown as
\begin{proposition}\cite{Zhao}
Let $A,B\in {\mathcal{B}}^{++}(\mathcal{H})$ and $v\in (0,1)$.

$\rm (I)$ If $0<v\leq \frac{1}{2}$, then
   \begin{equation}
       \label{p5}
        A\nabla_v B\ge 2v(A\nabla B-A\#B)+r_1(A\#B-2A\#_{\frac{1}{4}}B+A)+A\#_vB,
   \end{equation}
   \begin{equation}
       \label{p6}
        A\nabla_v B\leq 2(1-v)(A\nabla B-A\#B)-r_1(A\#B-2A\#_{\frac{3}{4}}B+B)+A\#_vB,
     \end{equation}

  $\rm (II)$ if $\frac{1}{2}<v< 1$, then
  \begin{equation}
       \label{p7}
         A\nabla_v B\ge 2(1-v)(A\nabla B-A\#B)+r_1(A\#B-2A\#_{\frac{3}{4}}B+B)+A\#_vB,
    \end{equation}
  \begin{equation}
       \label{p8}
         A\nabla_v B\leq 2v(A\nabla B-A\#B)-r_1(A\#B-2A\#_{\frac{1}{4}}B+A)+A\#_vB,
  \end{equation}
  where $r=\min\{v, 1-v\}$ and $r_1=\min\{2r, 1-2r\}$.
\end{proposition}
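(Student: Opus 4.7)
The plan is to lift each of the four scalar inequalities \eqref{p1}--\eqref{p4} to the operator setting via the standard monotonic functional calculus argument. First I would normalize: since each scalar inequality is homogeneous of degree one in $(a,b)$, I can divide through by $a$ and set $x=b/a>0$. For instance, \eqref{p1} becomes
\[
(1-v)+vx \;\ge\; x^{v} + v\bigl(1-\sqrt{x}\bigr)^{2} + r_{1}\bigl(\sqrt[4]{x}-1\bigr)^{2}, \qquad 0<v\le \tfrac12,
\]
and analogous one-variable inequalities follow from \eqref{p2}, \eqref{p3}, \eqref{p4}. Each side is a continuous real-valued function of $x\in(0,\infty)$, so the inequality is preserved under the functional calculus for any positive invertible operator.

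Next I would apply this functional calculus to the positive invertible operator $T:=A^{-1/2}BA^{-1/2}$. Using $T^{v}$, $T^{1/2}$, $T^{1/4}$, $T^{3/4}$ and $I$, the squared brackets expand; for example $\bigl(\sqrt[4]{x}-1\bigr)^{2}\mapsto T^{1/2}-2T^{1/4}+I$ and $\bigl(\sqrt[4]{x}-\sqrt{x}\bigr)^{2}\mapsto T^{1/2}-2T^{3/4}+T$. Then I would conjugate throughout by $A^{1/2}$. Since $A^{1/2}T^{\alpha}A^{1/2}=A\#_{\alpha}B$ for every $\alpha\in[0,1]$, and in particular $A^{1/2}TA^{1/2}=B$, $A^{1/2}IA^{1/2}=A$, and $A^{1/2}T^{1/2}A^{1/2}=A\#B$, the conjugation converts every term into the desired operator mean. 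Finally I would use the identity $A-2A\#B+B=2(A\nabla B-A\#B)$ to collect the $v$-coefficient term into the form $2v(A\nabla B-A\#B)$ (or $2(1-v)(A\nabla B-A\#B)$), which matches exactly the right-hand sides of \eqref{p5}--\eqref{p8}.

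Carrying this out once, say for \eqref{p5}, produces
\[
A\nabla_{v}B \;\ge\; A\#_{v}B + 2v(A\nabla B - A\#B) + r_{1}\bigl(A\#B - 2A\#_{1/4}B + A\bigr),
\]
and the other three cases are identical in structure, using the scalar inequalities \eqref{p2}, \eqref{p3}, \eqref{p4} respectively together with the corresponding regime of $v$. Because each operator inequality mirrors its scalar counterpart term-by-term, there is no new analytic content beyond what is already in Proposition 1.1.

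The only real point of care, rather than a genuine obstacle, is the bookkeeping for the two different quartic squares $(\sqrt[4]{ab}-\sqrt a)^{2}$ and $(\sqrt[4]{ab}-\sqrt b)^{2}$: after the normalization $a=1,\ b=x$ they become $(\sqrt[4]{x}-1)^{2}$ and $(\sqrt[4]{x}-\sqrt{x})^{2}$, which under the conjugation recipe above translate to $A\#B-2A\#_{1/4}B+A$ and $A\#B-2A\#_{3/4}B+B$ respectively; matching these to the four cases \eqref{p5}--\eqref{p8} according to whether $v\le\tfrac12$ or $v>\tfrac12$ is the only nontrivial check, and it follows directly from the symmetry already visible in \eqref{p1}--\eqref{p4}.
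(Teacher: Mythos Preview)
Your proposal is correct and follows exactly the standard functional-calculus lifting argument. Note that this proposition is merely quoted from \cite{Zhao} in the paper and not proved there; however, the paper does carry out precisely your argument in the proof of Theorem~\ref{th31} for the strengthened version with the Kantorovich constant (normalize to one variable, apply functional calculus to $X=A^{-1/2}BA^{-1/2}$, then conjugate by $A^{1/2}$), so your approach coincides with the paper's method.
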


In this paper, we are concerned with several improvements of the Young
and Heinz inequalities via the Kantorovich constant. In Section 2, we
present the whole series of refinements and reverses of the scalars Young
inequality which will help us to derive Heinz inequalities. In Section 3,
we extend inequalities proved in Section 2 from the scalars setting to a
Hilbert space operator setting. In Section 4, the Hilbert-Schmidt norm
inequalities are established.

\section{Scalars inequalities}

In this section, we mainly present the direct refinements and reverses of the Young inequality
for two positive numbers $a, b$. When $v=0$ and $v=1$, the Young inequality is trivial. We will
 study the case $v\in(0, 1)$.

\begin{theorem}\label{th21}
 Let $a, b>0$ and $v\in(0, 1)$. Then

$\rm (I)$ If $0<v\leq \frac{1}{2}$, then
\begin{equation}
\label{th211}
  (1-v)a+vb\geq v(\sqrt{a}-\sqrt{b})^2+r_1(\sqrt[4]{ab}-\sqrt{a})^2+\mathrm{K}(\sqrt[4]{h}, 2)^{\hat{r}_1}a^{1-v}b^v,
\end{equation}

$\rm (II)$ if $\frac{1}{2}<v< 1$, then
\begin{equation}
\label{th212}
  (1-v)a+vb\geq (1-v)(\sqrt{a}-\sqrt{b})^2+ r_1(\sqrt[4]{ab}-\sqrt{b})^2+\mathrm{K}(\sqrt[4]{h}, 2)^{\hat{r}_1}a^{1-v}b^v,
\end{equation}
where $h=\dfrac{b}{a}$, $r=\min\{v, 1-v\}$, $r_1=\min\{2r, 1-2r\}$ and $\hat{r}_1=\min\{2r_1, 1-2r_1\}$.
\end{theorem}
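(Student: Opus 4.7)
The plan is to obtain both inequalities as immediate consequences of the Wu--Zhao refinement \eqref{wu1} after a single substitution that doubles the weight and replaces one of the two positive numbers with the geometric mean $\sqrt{ab}$.

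For part (I), suppose $0<v\le\tfrac12$, so that $2v\in(0,1]$. The plan is to apply \eqref{wu1} to the positive numbers $a$ and $\sqrt{ab}$ with weight $2v$ in place of $v$. Under this substitution the new ratio is $\sqrt{ab}/a=\sqrt{h}$, whose square root is $\sqrt[4]{h}$, so the Kantorovich factor becomes $\mathrm{K}(\sqrt[4]{h},2)$; the weighted geometric mean collapses to $a^{1-2v}(\sqrt{ab})^{2v}=a^{1-v}b^v$; the coefficient of the squared--difference term is $\min\{2v,1-2v\}=r_1$; and the corresponding Kantorovich exponent becomes $\min\{2r_1,1-2r_1\}=\hat r_1$. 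Hence \eqref{wu1} yields
\[(1-2v)a+2v\sqrt{ab}\;\ge\; r_1(\sqrt[4]{ab}-\sqrt{a})^{2}+\mathrm{K}(\sqrt[4]{h},2)^{\hat r_1}\,a^{1-v}b^v.\]
Finally \eqref{th211} follows by adding the elementary identity $v(\sqrt{a}-\sqrt{b})^2=(1-v)a+vb-(1-2v)a-2v\sqrt{ab}$ to both sides.

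For part (II), suppose $\tfrac12<v<1$ and apply \eqref{wu1} symmetrically to $b$ and $\sqrt{ab}$ with weight $2-2v\in(0,1)$. Now the ratio $\sqrt{ab}/b=1/\sqrt{h}$ has square root $1/\sqrt[4]{h}$, and the invariance $\mathrm{K}(t,2)=\mathrm{K}(1/t,2)$ brings the Kantorovich factor back to $\mathrm{K}(\sqrt[4]{h},2)$; the weighted geometric mean equals $b^{2v-1}(\sqrt{ab})^{2-2v}=a^{1-v}b^v$; and since $r=1-v$ here, the new coefficient $\min\{2-2v,2v-1\}$ coincides with $r_1$, while its subsequent min is $\hat r_1$. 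Adding the identity $(1-v)(\sqrt{a}-\sqrt{b})^2=(1-v)a+vb-(2v-1)b-(2-2v)\sqrt{ab}$ to the resulting inequality gives \eqref{th212}.

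The entire argument is essentially parameter bookkeeping. The only point that requires brief verification is that the nested minima $r\mapsto r_1\mapsto\hat r_1$ and each exponent track correctly through the doubling substitution, together with the symmetry $\mathrm{K}(t,2)=\mathrm{K}(1/t,2)$ in part (II). No serious obstacle is expected beyond this bookkeeping; the hard work has already been absorbed into \eqref{wu1}.
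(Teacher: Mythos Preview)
Your proposal is correct and follows essentially the same route as the paper: both reduce \eqref{th211} to \eqref{wu1} applied to the pair $(a,\sqrt{ab})$ with weight $2v$, via the identity $(1-v)a+vb-v(\sqrt a-\sqrt b)^2=(1-2v)a+2v\sqrt{ab}$. The only cosmetic differences are that the paper writes out the subcases $0<v<\tfrac14$ and $\tfrac14<v<\tfrac12$ explicitly (where you track the minima uniformly through $r_1,\hat r_1$), and that the paper declares \eqref{th212} ``similar'' while you spell out the symmetric substitution and the use of $\mathrm{K}(t,2)=\mathrm{K}(1/t,2)$.
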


\begin{proof}
The proof of the inequality \eqref{th212} is similar to that of \eqref{th211}. Thus,
we only need to prove the inequality \eqref{th211}.

If $v=\frac{1}{4}$ and $v=\frac{1}{2}$, the inequality \eqref{th211} becomes equality.

By the inequality \eqref{wu1}, if $0<v<\frac{1}{4}$, then we get
\begin{align*}
(1-v)a+vb&-v(\sqrt{a}-\sqrt{b})^2\\
&=2v\sqrt{ab}+(1-2v)a\\
&\geq 2v(\sqrt[4]{ab}-\sqrt{a})^2+\mathrm{K}(\sqrt[4]{h}, 2)^{\min\{4v, 1-4v\}}a^{1-v}b^v,
\end{align*}
if $\frac{1}{4}<v< \frac{1}{2}$, then we get
\begin{align*}
(1-v)a+vb&-v(\sqrt{a}-\sqrt{b})^2\\
&=2v\sqrt{ab}+(1-2v)a\\
&\geq (1-2v)(\sqrt[4]{ab}-\sqrt{a})^2+\mathrm{K}(\sqrt[4]{h}, 2)^{\min\{2-4v,4v-1\}}a^{1-v}b^v.
\end{align*}
So we conclude that
\[ (1-v)a+vb\geq v(\sqrt{a}-\sqrt{b})^2+r_1(\sqrt[4]{ab}-\sqrt{a})^2+\mathrm{K}(\sqrt[4]{h}, 2)^{\hat{r}_1}a^{1-v}b^v.\]
This completes the proof.
\end{proof}

\begin{remark}
By the properties of Kantorovich constant, \eqref{th211} and \eqref{th212} are better than \eqref{p1} and \eqref{p3}, respectively.
As a direct consequence of Theorem \ref{th21}, we have the following inequalities with respect to the Heinz mean:
\begin{equation}
\label{rem1}
  \frac{a+b}{2}\geq r(\sqrt{a}-\sqrt{b})^2+\frac{1}{2}r_1\left[(\sqrt[4]{ab}-\sqrt{a})^2+(\sqrt[4]{ab}-\sqrt{b})^2\right]+\mathrm{K}(\sqrt[4]{h}, 2)^{\hat{r}_1}H_v(a,b).
\end{equation}

\end{remark}

\begin{corollary}\label{co21}
 Let $a, b>0$ and $v\in(0, 1)$. Then

$\rm (I)$ If $0<v\leq \frac{1}{2}$, then
\begin{equation}
\label{co211}
  ((1-v)a+vb)^2\geq v^2(a-b)^2+r_1(\sqrt{ab}-a)^2+\mathrm{K}(\sqrt{h}, 2)^{\hat{r}_1}\left(a^{1-v}b^v\right)^2.
\end{equation}

$\rm (II)$ If $\frac{1}{2}<v<1$, then
\begin{equation}
\label{co212}
  ((1-v)a+vb)^2\geq (1-v)^2(a-b)^2+ r_1(\sqrt{ab}-b)^2+\mathrm{K}(\sqrt{h}, 2)^{\hat{r}_1}\left(a^{1-v}b^v\right)^2.
\end{equation}
\end{corollary}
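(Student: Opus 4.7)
The plan is to derive Corollary 2.1 as a direct consequence of Theorem 2.1 by the substitution $a \mapsto a^2$, $b \mapsto b^2$, combined with a simple algebraic identity that converts $(1-v)a^2 + vb^2$ into $((1-v)a + vb)^2$ plus a correction term.

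First I would record the identity
\begin{equation*}
(1-v)a^2 + vb^2 \;=\; \bigl((1-v)a + vb\bigr)^2 + v(1-v)(a-b)^2,
\end{equation*}
which follows by direct expansion. Next I would apply Theorem \ref{th21} with $a$ replaced by $a^2$ and $b$ replaced by $b^2$. Note that under this substitution the parameter $h = b/a$ becomes $b^2/a^2$, so $\sqrt[4]{h}$ becomes $\sqrt{h}$; similarly $\sqrt{a^2}=a$, $\sqrt{b^2}=b$, and $\sqrt[4]{a^2b^2}=\sqrt{ab}$, while $(a^2)^{1-v}(b^2)^v = (a^{1-v}b^v)^2$. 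The parameters $r$, $r_1$, $\hat{r}_1$ depend only on $v$ and are unchanged.

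For case (I) with $0 < v \le \tfrac{1}{2}$, Theorem \ref{th21} yields
\begin{equation*}
(1-v)a^2 + vb^2 \;\ge\; v(a-b)^2 + r_1(\sqrt{ab}-a)^2 + \mathrm{K}(\sqrt{h},2)^{\hat{r}_1}(a^{1-v}b^v)^2.
\end{equation*}
Subtracting $v(1-v)(a-b)^2$ from both sides and using the identity above, the left-hand side becomes $((1-v)a+vb)^2$, while on the right-hand side the coefficient of $(a-b)^2$ simplifies as $v - v(1-v) = v^2$, giving exactly \eqref{co211}. Case (II) is entirely analogous: the Theorem supplies the inequality with leading coefficient $(1-v)$, and after subtracting $v(1-v)(a-b)^2$ one obtains $(1-v) - v(1-v) = (1-v)^2$, producing \eqref{co212}.

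There is no genuine obstacle here; the only thing to watch is the bookkeeping of the constants under the substitution $a\mapsto a^2$, $b\mapsto b^2$, and the correct simplification of $v - v(1-v)$ and $(1-v) - v(1-v)$. I would present the argument in a few lines for case (I) and remark that case (II) follows by the identical procedure applied to \eqref{th212}.
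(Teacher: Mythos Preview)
Your proposal is correct and follows essentially the same approach as the paper: substitute $a\mapsto a^2$, $b\mapsto b^2$ in Theorem~\ref{th21}, then use the identity $((1-v)a+vb)^2 - v^2(a-b)^2 = (1-v)a^2+vb^2 - v(a-b)^2$ (equivalently, your form $(1-v)a^2+vb^2 = ((1-v)a+vb)^2 + v(1-v)(a-b)^2$) to convert the weighted sum of squares into the square of the weighted sum. The bookkeeping for $h$, $r_1$, $\hat r_1$ is handled correctly.
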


\begin{proof}
Replacing $a$ by $a^2$ and $b$ by $b^2$ in \eqref{th211} and \eqref{th212}, respectively, we have
\[
(1-v)a^2+vb^2\geq v(a-b)^2+r_1(\sqrt{ab}-a)^2+\mathrm{K}(\sqrt{h}, 2)^{\hat{r}_1}\left(a^{1-v}b^v\right)^2
\]
and
\begin{equation}
\label{co213}
(1-v)a^2+vb^2\geq (1-v)(a-b)^2+ r_1(\sqrt{ab}-b)^2+\mathrm{K}(\sqrt{h}, 2)^{\hat{r}_1}\left(a^{1-v}b^v\right)^2.
\end{equation}
If $0<v\leq \frac{1}{2}$, then by the first inequality above, we obtain
\begin{align*}
((1-v)a+vb)^2&-v^2(a-b)^2\\
&=(1-v)a^2+vb^2-v(a-b)^2\\
&\geq r_1(\sqrt{ab}-a)^2+\mathrm{K}(\sqrt{h}, 2)^{\hat{r}_1}\left(a^{1-v}b^v\right)^2.
\end{align*}
If $\frac{1}{2}<v<1$, then by using \eqref{co213}, we get
\begin{align*}
((1-v)a+vb)^2&-(1-v)^2(a-b)^2\\
&=(1-v)a^2+vb^2-(1-v)(a-b)^2\\
&\geq r_1(\sqrt{ab}-b)^2+\mathrm{K}(\sqrt{h}, 2)^{\hat{r}_1}\left(a^{1-v}b^v\right)^2.
\end{align*}
\end{proof}

\begin{theorem}\label{th22}
 Let $a, b>0$ and $v\in(0, 1)$. Then

$\rm (I)$ If $0<v\leq \frac{1}{2}$, then
\begin{equation}
\label{th221}
  (1-v)a+vb\leq (1-v)(\sqrt{a}-\sqrt{b})^2-r_1(\sqrt[4]{ab}-\sqrt{b})^2+\mathrm{K}(\sqrt[4]{h}, 2)^{-\hat{r}_1}a^{1-v}b^v,
\end{equation}

$\rm (II)$ If $\frac{1}{2}<v<1$, then
\begin{equation}
\label{th222}
  (1-v)a+vb\leq v(\sqrt{a}-\sqrt{b})^2- r_1(\sqrt[4]{ab}-\sqrt{a})^2+\mathrm{K}(\sqrt[4]{h}, 2)^{-\hat{r}_1}a^{1-v}b^v,
\end{equation}
where $h=\dfrac{b}{a}$, $r=\min\{v, 1-v\}$, $r_1=\min\{2r, 1-2r\}$ and $\hat{r}_1=\min\{2r_1, 1-2r_1\}$.
\end{theorem}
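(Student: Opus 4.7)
My plan is to mirror the proof of Theorem~\ref{th21}, using the reverse Kantorovich-refined Young inequality \eqref{wu2} in place of the forward version \eqref{wu1}. First, observe that the substitution $(a,b,v)\mapsto(b,a,1-v)$ leaves $(1-v)a+vb$, $a^{1-v}b^v$, $(\sqrt{a}-\sqrt{b})^2$, $r_1$, $\hat{r}_1$, and $\mathrm{K}(\sqrt[4]{h},2)$ invariant, while swapping the coefficients $v\leftrightarrow 1-v$ and the squared-difference terms $(\sqrt[4]{ab}-\sqrt{a})^2\leftrightarrow(\sqrt[4]{ab}-\sqrt{b})^2$. This symmetry interchanges \eqref{th221} and \eqref{th222}, so it suffices to establish just \eqref{th221}.

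For \eqref{th221}, the starting point is the algebraic identity
\[
(1-v)a+vb-(1-v)(\sqrt{a}-\sqrt{b})^2 = 2(1-v)\sqrt{ab}+(2v-1)b,
\]
which presents the right-hand side as an affine combination of $\sqrt{ab}$ and $b$ whose weights sum to $1$. The idea is to apply \eqref{wu2} to the pair $(b,\sqrt{ab})$ with weight $2(1-v)$: the squared-difference term becomes $(\sqrt{b}-\sqrt[4]{ab})^2=(\sqrt[4]{ab}-\sqrt{b})^2$, the Kantorovich factor becomes $\mathrm{K}(\sqrt[4]{1/h},2)^{-\hat{r}_1}=\mathrm{K}(\sqrt[4]{h},2)^{-\hat{r}_1}$ by the symmetry $\mathrm{K}(t,2)=\mathrm{K}(1/t,2)$, and the Young geometric mean simplifies via
\[
b^{2v-1}(\sqrt{ab})^{2(1-v)}=a^{1-v}b^v,
\]
reproducing the right-hand side of \eqref{th221} exactly. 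The range $v\in(0,\tfrac12]$ splits at $v=\tfrac14$ into two sub-cases on which $r_1=\min\{2v,1-2v\}$ equals $2v$ or $1-2v$, and correspondingly $\hat{r}_1=\min\{2r_1,1-2r_1\}$ takes two distinct functional forms, mirroring the sub-case analysis in the proof of Theorem~\ref{th21}.

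The main obstacle is that for $v\in(0,\tfrac12)$ the weight $2(1-v)$ lies in $(1,2)$, outside the nominal range $[0,1]$ of \eqref{wu2}. I expect to address this by verifying that \eqref{wu2} extends to weights greater than $1$: one may apply the substitution $w\mapsto 1-w$ (swapping the two entries of the pair) to reduce to an in-range weight, and track how $\max\{w,1-w\}$ and $\min\{w,1-w\}$ transform so that the Kantorovich exponent $\hat{r}_1$ is preserved. Once this extended form of \eqref{wu2} is secured, the remainder of the argument is a routine substitution precisely paralleling the computation in Theorem~\ref{th21}.
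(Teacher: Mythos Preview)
Your reduction of (II) to (I) via the symmetry $(a,b,v)\mapsto(b,a,1-v)$ is fine. The gap is in the plan for (I). The substitution $w\mapsto 1-w$ you propose sends the out-of-range weight $w=2(1-v)\in(1,2)$ to $1-w=2v-1\in(-1,0)$, which is still outside $[0,1]$; so \eqref{wu2} is not recovered by that symmetry, and no extension of \eqref{wu2} to such weights is established in the paper or in the cited references. Moreover, even granting an extension, the squared-difference term in \eqref{wu2} carries the coefficient $R=\max\{w,1-w\}$, which for $w=2(1-v)$ would be $2(1-v)$---nowhere near the coefficient $-r_1$ required on the right of \eqref{th221}. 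A direct application of \eqref{wu2} to the pair $(b,\sqrt{ab})$ therefore cannot reproduce the target.

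The paper avoids this by using the \emph{forward} inequality \eqref{wu1}, not the reverse one. From the identity
\[
(1-v)(\sqrt a-\sqrt b)^2-(1-v)a-vb \;=\; (1-2v)b+2v\sqrt{ab}-2\sqrt{ab},
\]
apply \eqref{wu1} to the genuine convex combination $(1-2v)b+2v\sqrt{ab}$ (weight $2v\in(0,1)$), obtaining
\[
(1-2v)b+2v\sqrt{ab}\;\ge\; r_1(\sqrt[4]{ab}-\sqrt b)^2+\mathrm{K}(\sqrt[4]{h},2)^{\hat r_1}\,a^{v}b^{1-v}.
\]
Note that the Young product here is the \emph{conjugate} $a^{v}b^{1-v}$, not $a^{1-v}b^{v}$. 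The residual $-2\sqrt{ab}$ is then cancelled by a single AM--GM step:
\[
\mathrm{K}(\sqrt[4]{h},2)^{-\hat r_1}a^{1-v}b^{v}+\mathrm{K}(\sqrt[4]{h},2)^{\hat r_1}a^{v}b^{1-v}\;\ge\;2\sqrt{ab}.
\]
This pairing of \eqref{wu1} with AM--GM is the missing idea; the reverse estimate \eqref{th221} does not come from \eqref{wu2} at all.
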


\begin{proof}
The proof of inequality \eqref{th222} is similar to that of \eqref{th221}. Thus, we only need to prove \eqref{th221}.

If $0<v<\frac{1}{2}$, then by the inequality \eqref{wu1}, we deduce
\begin{align*}
  &\mathrm{K}(\sqrt[4]{h}, 2)^{-\hat{r}_1}a^{1-v}b^v+(1-v)(\sqrt{a}-\sqrt{b})^2-(1-v)a-vb\\
  &=\mathrm{K}(\sqrt[4]{h}, 2)^{-\hat{r}_1}a^{1-v}b^v+(1-2v)b+2v\sqrt{ab}-2\sqrt{ab}\\
  &\geq \mathrm{K}(\sqrt[4]{h}, 2)^{-\hat{r}_1}a^{1-v}b^v+K(\sqrt[4]{h}, 2)^{\hat{r}_1}a^v b^{1-v}+r_1(\sqrt[4]{ab}-\sqrt{b})^2-2\sqrt{ab}\\
  &\geq r_1(\sqrt[4]{ab}-\sqrt{b})^2.
\end{align*}

If $v=\frac{1}{2}$, the inequality \eqref{th221} becomes equality.
\end{proof}

\begin{remark}
By the properties of Kantorovich constant, \eqref{th221} and \eqref{th222} are better than \eqref{p2} and \eqref{p4}, respectively.
As a direct consequence of Theorem \ref{th22}, we have the following inequalities with respect to the Heinz mean:
\begin{equation}
\label{rem2}
  \frac{a+b}{2}\leq R(\sqrt{a}-\sqrt{b})^2-\frac{1}{2}r_1[(\sqrt[4]{ab}-\sqrt{a})^2+(\sqrt[4]{ab}-\sqrt{b})^2]+\mathrm{K}(\sqrt[4]{h}, 2)^{-\hat{r}_1}H_v(a,b),
\end{equation}
where $R=\max\{v, 1-v\}$.
\end{remark}

\begin{corollary}\label{co22}
 Let $a, b>0$ and $v\in(0, 1)$. Then

$\rm (I)$ If $0<v\leq \frac{1}{2}$, then
\[
  ((1-v)a+vb)^2\leq (1-v)^2(a-b)^2-r_1(\sqrt{ab}-b)^2+\mathrm{K}(\sqrt{h}, 2)^{-\hat{r}_1}\left(a^{1-v}b^v\right)^2.
\]

$\rm (II)$ If $\frac{1}{2}<v<1$, then
\[
  ((1-v)a+vb)^2\leq v^2(a-b)^2- r_1(\sqrt{ab}-a)^2+\mathrm{K}(\sqrt{h}, 2)^{-\hat{r}_1}\left(a^{1-v}b^v\right)^2.
\]
\end{corollary}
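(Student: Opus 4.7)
The plan is to mimic exactly the derivation of Corollary \ref{co21} from Theorem \ref{th21}: take the two inequalities of Theorem \ref{th22} and substitute $a \mapsto a^{2}$, $b \mapsto b^{2}$. Under this substitution one has $h = b/a \mapsto b^{2}/a^{2}$, so $\sqrt[4]{h} \mapsto \sqrt{h}$; also $\sqrt{a} \mapsto a$, $\sqrt{b} \mapsto b$, $\sqrt[4]{ab} \mapsto \sqrt{ab}$, and $a^{1-v}b^{v} \mapsto (a^{1-v}b^{v})^{2}$, while $r$, $r_{1}$, $\hat r_{1}$ (which depend only on $v$) are untouched. So \eqref{th221} and \eqref{th222} immediately become
\[
(1-v)a^{2}+vb^{2} \le (1-v)(a-b)^{2} - r_{1}(\sqrt{ab}-b)^{2} + \mathrm{K}(\sqrt{h},2)^{-\hat r_{1}} (a^{1-v}b^{v})^{2},
\]
for $0<v\le\tfrac12$, and the analogous inequality for $\tfrac12<v<1$.

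Next I would reduce these to statements about $((1-v)a+vb)^{2}$ by verifying the elementary algebraic identities
\[
(1-v)a^{2} + v b^{2} - v(a-b)^{2} = ((1-v)a + vb)^{2} - v^{2}(a-b)^{2}
\]
and
\[
(1-v)a^{2} + v b^{2} - (1-v)(a-b)^{2} = ((1-v)a + vb)^{2} - (1-v)^{2}(a-b)^{2}.
\]
Both expand to $(1-2v)a^{2}+2v\, ab$ and $(2v-1)b^{2}+2(1-v)ab$ respectively, so they are trivial to check. Applying the first identity in case $\tfrac12 < v < 1$ to the transformed \eqref{th222} and the second identity in case $0<v\le\tfrac12$ to the transformed \eqref{th221} rearranges each inequality into precisely the desired form of Corollary \ref{co22}.

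There is essentially no obstacle here: the only thing to be slightly careful about is bookkeeping which of the two identities is paired with which range of $v$, and checking that $r_{1}$ and $\hat r_{1}$ really are invariant under the substitution (they are, since they are functions of $v$ alone). The whole proof is just a two-line computation mirroring Corollary \ref{co21}, and I would simply write it in that parallel form.
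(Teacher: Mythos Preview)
Your proposal is correct and follows exactly the paper's own approach: substitute $a\mapsto a^{2}$, $b\mapsto b^{2}$ in Theorem \ref{th22} and then use the same algebraic identities as in the proof of Corollary \ref{co21} to rewrite the left-hand side in terms of $((1-v)a+vb)^{2}$. Your case matching and bookkeeping on $h$, $r_1$, $\hat r_1$ are all correct.
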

\begin{proof}
Replacing $a$ by $a^2$ and $b$ by $b^2$ in \eqref{th221} and \eqref{th222}, respectively, we have
\[
 (1-v)a^2+vb^2\leq (1-v)(a-b)^2-r_1(\sqrt{ab}-b)^2+\mathrm{K}(\sqrt{h}, 2)^{-\hat{r}_1}\left(a^{1-v}b^v\right)^2
\]
and
\[
(1-v)a^2+vb^2\leq v(a-b)^2- r_1(\sqrt{ab}-a)^2+\mathrm{K}(\sqrt{h}, 2)^{-\hat{r}_1}\left(a^{1-v}b^v\right)^2.
\]
The rest proof is similar to Corollary \ref{co21}.
\end{proof}

\section{Operator inequalities}

If $A$ is a selfadjoint operator and $f$ is a real valued continuous function on $\mathrm{Sp}(A)$ (the spectrum of $A$), then $f(t)\ge 0$
for every $t\in \mathrm{Sp}(A)$ implies that $f(A)\ge 0$, i.e., $f(A)$ is a positive operator on $\mathcal{H}$. Equivalently,
if both $f$ and $g$ are real valued continuous functions on $\mathrm{Sp}(A)$, then the following monotonic property
of operator functions holds:
 \[f(t)\ge g(t)~\mathrm{for~any}~t\in \mathrm{Sp}(A)~\mathrm{implies~that}~f(A)\ge g(A)\]
in the operator order of $\mathcal{B}(\mathcal{H})$.

\begin{theorem}\label{th31}
Let $A,B\in {\mathcal{B}}^{++}(\mathcal{H})$ and positive real numbers $m, m', M, M'$ satisfy either $0 < m'I \le A \le mI < MI \le B \le M'I$ or $0 < m'I \le B \le mI < MI \le A \le M'I$.

  $\rm (I)$ If $0<v\leq \frac{1}{2}$, then
   \begin{equation}
       \label{th311}
        A\nabla_v B\ge 2v(A\nabla B-A\#B)+r_1(A\#B-2A\#_{\frac{1}{4}}B+A)+\mathrm{K}(\sqrt[4]{h}, 2)^{\hat{r}_1}A\#_vB,
   \end{equation}

  $\rm (II)$ if $\frac{1}{2}<v< 1$, then
  \begin{equation}
       \label{th312}
         A\nabla_v B\ge 2(1-v)(A\nabla B-A\#B)+r_1(A\#B-2A\#_{\frac{3}{4}}B+B)+\mathrm{K}(\sqrt[4]{h}, 2)^{\hat{r}_1}A\#_vB,
  \end{equation}
where $h=\frac{M}{m}$, $r=\min\{v, 1-v\}$, $r_1=\min\{2r, 1-2r\}$ and $\hat{r}_1=\min\{2r_1, 1-2r_1\}$.
Equality holds if and only if $A=B$ and $m=M$.
\end{theorem}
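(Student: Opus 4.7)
The plan is to reduce each operator inequality to its scalar counterpart from Theorem \ref{th21} via the functional calculus applied to the positive operator $X := A^{-1/2}BA^{-1/2}$. For (I), I would first substitute $a=1,\, b=x>0$ into \eqref{th211} to obtain the one-variable inequality
\[
(1-v)+vx \;\geq\; v(1-\sqrt{x})^2 + r_1(\sqrt[4]{x}-1)^2 + \mathrm{K}(\sqrt[4]{x},2)^{\hat{r}_1}\,x^v,
\]
and similarly invoke \eqref{th212} for (II). I would then localize the Kantorovich factor: using the spectral hypotheses, $\mathrm{Sp}(X)\subset[h,\,M'/m']$ in the first case and $\mathrm{Sp}(X)\subset[m'/M',\,1/h]$ in the second, where $h=M/m>1$. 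Combining the monotonicity of $\mathrm{K}(\cdot,2)$ on $[1,\infty)$ and $(0,1]$ with the symmetry $\mathrm{K}(t,2)=\mathrm{K}(1/t,2)$, in both regimes every $x\in\mathrm{Sp}(X)$ satisfies $\mathrm{K}(\sqrt[4]{x},2)\geq \mathrm{K}(\sqrt[4]{h},2)\geq 1$; since $\hat{r}_1\geq 0$, this upgrades to $\mathrm{K}(\sqrt[4]{x},2)^{\hat{r}_1}\geq \mathrm{K}(\sqrt[4]{h},2)^{\hat{r}_1}$.

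Having replaced the spectrum-dependent Kantorovich factor by the uniform constant $\mathrm{K}(\sqrt[4]{h},2)^{\hat{r}_1}$, the scalar inequality becomes a pointwise inequality between continuous functions on $\mathrm{Sp}(X)$. The monotonic property of functional calculus recalled immediately before the theorem then yields the operator inequality
\[
(1-v)I+vX \;\geq\; v(I-X^{1/2})^2 + r_1(X^{1/4}-I)^2 + \mathrm{K}(\sqrt[4]{h},2)^{\hat{r}_1}X^v
\]
for case (I), and the analogous inequality with $(X^{1/4}-X^{1/2})^2$ and $(1-v)$ in place of the first two coefficients for case (II).

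Finally, I would perform the congruence by $A^{1/2}$ on both sides. Using the defining identities $A^{1/2}X^\alpha A^{1/2}=A\#_\alpha B$ for $\alpha\in\{1/4,\,1/2,\,3/4,\,v\}$, together with $A^{1/2}X A^{1/2}=B$ and $A^{1/2}I A^{1/2}=A$, the left side becomes $A\nabla_v B$; the term $v(I-X^{1/2})^2$ expands and transforms to $v(A-2A\#B+B)=2v(A\nabla B-A\#B)$; the anchor term $r_1(X^{1/4}-I)^2$ transforms to $r_1(A\#B-2A\#_{1/4}B+A)$; and $X^v$ transforms to $A\#_v B$. This produces \eqref{th311}. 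The parallel computation, using $(X^{1/4}-X^{1/2})^2=X^{1/2}-2X^{3/4}+X$, produces the anchor $r_1(A\#B-2A\#_{3/4}B+B)$ and hence \eqref{th312}. The equality statement then follows by tracing the equality cases of the scalar inequalities in Theorem \ref{th21}, which force $x=1$ on $\mathrm{Sp}(X)$ (i.e.\ $A=B$) and degenerate the Kantorovich step ($h=1$, i.e.\ $m=M$).

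The only nontrivial step is the uniform lower bound on the Kantorovich factor: because $\mathrm{K}(t,2)$ is neither monotone nor bounded below by $\mathrm{K}(\sqrt[4]{h},2)$ globally, the hypothesis decoupling $\mathrm{Sp}(A)$ from $\mathrm{Sp}(B)$ by the gap $[m,M]$ is precisely what confines $\sqrt[4]{x}$ to one side of $1$ and makes the monotonicity argument go through; after that, the rest is a mechanical translation via functional calculus and congruence.
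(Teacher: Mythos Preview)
Your proposal is correct and follows essentially the same approach as the paper: reduce to the scalar inequality of Theorem \ref{th21} with $a=1$, $b=x$, apply the functional calculus to $X=A^{-1/2}BA^{-1/2}$, use the spectral gap hypothesis together with the monotonicity and symmetry of $\mathrm{K}(\cdot,2)$ to replace $\mathrm{K}(\sqrt[4]{x},2)^{\hat r_1}$ by the uniform lower bound $\mathrm{K}(\sqrt[4]{h},2)^{\hat r_1}$, and then conjugate by $A^{1/2}$. Your write-up is in fact slightly more explicit than the paper's about the final congruence step and the equality case.
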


 \begin{proof}

If $0<v\leq \frac{1}{2}$, it follows from the inequality \eqref{th211} that for any $x>0$,
\begin{equation}
       \label{th313}
(1-v)+vx\geq v(\sqrt{x}-1)^2+r_1(\sqrt[4]{x}-1)^2+\mathrm{K}(\sqrt[4]{x}, 2)^{\hat{r}_1}x^v.
 \end{equation}

Putting $X=A^{-\frac{1}{2}}BA^{-\frac{1}{2}},$ under the condition $0 < m'I \le A \le mI < MI \le B \le M'I$, we have
$$I\le hI =\frac{M}{m}I \le X\le h'I =\frac{M'}{m'}I,$$
 and then $\mathrm{Sp}(X)\subset [h, h']\subset(1,+\infty)$. Thus for  positive operator $X$,
 it can be deduced from the inequality \eqref{th313} and the monotonic property of operator functions that
\[
 (1-v)I+vX\geq v(X-2X^{\frac{1}{2}}+I)+r_1(X^{\frac{1}{2}}-2X^{\frac{1}{4}}+I)+\mathop {\min }\limits_{h\le x\le h'}{\rm K}(\sqrt[4]{x}, 2)^{\hat{r}_1}x^v.
\]
On the other hand, the Kantorovich constant ${\rm K}(t,2)$ is an increasing function on $(1,+\infty)$, we get
\begin{equation}
       \label{th314}
   \begin{split}
  (1-v)I+vA^{-\frac{1}{2}}BA^{-\frac{1}{2}}
 \geq& v(A^{-\frac{1}{2}}BA^{-\frac{1}{2}}-2(A^{-\frac{1}{2}}BA^{-\frac{1}{2}})^{\frac{1}{2}}+I)\\
 &+r_1((A^{-\frac{1}{2}}BA^{-\frac{1}{2}})^{\frac{1}{2}}-2(A^{-\frac{1}{2}}BA^{-\frac{1}{2}})^{\frac{1}{4}}+I)\\
  &+ {\rm K}(\sqrt[4]{h}, 2)^{\hat{r}_1}(A^{-\frac{1}{2}}BA^{-\frac{1}{2}})^v,
\end{split}
 \end{equation}

Likewise, under the condition $0 < m'I \le B \le mI < MI \le A \le M'I$, we have $0\le \frac{1}{h'}I \le X\le  \frac{1}{h}I<I$
and then $\mathrm{Sp}(X)\subset [ \frac{1}{h'},  \frac{1}{h}]\subset(0,1)$.
Thus for  positive operator $X$, we obtain
\[
 (1-v)I+vX\geq v(X-2X^{\frac{1}{2}}+I)+r_1(X^{\frac{1}{2}}-2X^{\frac{1}{4}}+I)+\mathop {\min }\limits_{ \frac{1}{h'}\le x\le \frac{1}{h}}{\rm K}(\sqrt[4]{x}, 2)^{\hat{r}_1}x^v.
\]
On the other hand, the Kantorovich constant ${\rm K}(t,2)$ is an decreasing function on $(0,1)$ and ${\rm K}(\frac{1}{t},2)={\rm K}(t,2)$,we get
\begin{equation}
       \label{th315}
 (1-v)I+vX\geq v(X-2X^{\frac{1}{2}}+I)+r_1(X^{\frac{1}{2}}-2X^{\frac{1}{4}}+I)+{\rm K}(\sqrt[4]{h}, 2)^{\hat{r}_1}x^v.
 \end{equation}
It is striking that we obtain two same inequalities \eqref{th314} and \eqref{th315} under the
two different condition. Then multiplying inequality \eqref{th314} or \eqref{th315}
by $A^{\frac{1}{2}}$ on both sides, we can deduce the required inequality (\ref{th311}).

If $\frac{1}{2}<v< 1$, the inequality \eqref{th312} follows from inequality \eqref{th212}
by the similar method.
\end{proof}

The operator version of \eqref{rem1} can be shown as
\begin{corollary}
Under the same conditions as Theorem \ref{th31}, then
%
   \begin{equation}
       \label{co31}
        A\nabla B\geq 2r(A\nabla B-A\#B)+r_1(A\nabla B+A\#B-2H_{\frac{1}{4}}(A,B))+\mathrm{K}(\sqrt[4]{h}, 2)^{\hat{r}_1}H_v(A,B).
  \end{equation}

\end{corollary}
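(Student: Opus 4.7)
The plan is to derive \eqref{co31} by averaging Theorem \ref{th31} applied at $v$ and at $1-v$, in exact analogy with how the scalar Heinz-type inequality \eqref{rem1} follows from \eqref{th211}--\eqref{th212}. This averaging is well-behaved because each of the constants $r=\min\{v,1-v\}$, $r_1=\min\{2r,1-2r\}$, $\hat{r}_1=\min\{2r_1,1-2r_1\}$, and hence the Kantorovich factor $\mathrm{K}(\sqrt[4]{h},2)^{\hat{r}_1}$, is symmetric under the exchange $v \leftrightarrow 1-v$, so identical multipliers appear in the two inequalities and combine coherently.

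Concretely, for $0<v\leq\frac{1}{2}$ I would apply Theorem \ref{th31}(I) at $v$ (which is \eqref{th311}), and Theorem \ref{th31}(II) at $1-v\geq\frac{1}{2}$, whose coefficient in front of $A\nabla B - A\# B$ reads $2(1-(1-v))=2v$ and therefore matches the coefficient produced by (I). Adding the two inequalities, dividing by $2$, and invoking the identities
\[
A\nabla_v B + A\nabla_{1-v} B = A+B = 2\,A\nabla B, \qquad A\#_v B + A\#_{1-v} B = 2H_v(A,B),
\]
\[
A\#_{\frac{1}{4}} B + A\#_{\frac{3}{4}} B = 2H_{\frac{1}{4}}(A,B), \qquad \frac{A+B}{2}=A\nabla B,
\]
collapses the right-hand side to precisely the one in \eqref{co31}, with the coefficient $2v$ reading as $2r$ in this regime.

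The case $\frac{1}{2}<v<1$ is handled by the mirror argument: $v$ now uses Theorem \ref{th31}(II) and $1-v$ uses Theorem \ref{th31}(I), producing matching coefficients $2(1-v)=2r$ and the same final inequality. I do not anticipate a genuine obstacle here, since the argument is essentially bookkeeping of symmetric constants; the one verification requiring care is that the two distinct quadratic corrections $r_1(A\# B - 2 A\#_{\frac{1}{4}} B + A)$ and $r_1(A\# B - 2 A\#_{\frac{3}{4}} B + B)$ add to $2 r_1\bigl(A\nabla B + A\# B - 2 H_{\frac{1}{4}}(A,B)\bigr)$, which is immediate from the displayed identities above.
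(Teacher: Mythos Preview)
Your proposal is correct and matches the paper's intended (though not explicitly written) derivation: the corollary is presented as an immediate consequence of Theorem~\ref{th31}, obtained precisely by the symmetric averaging of \eqref{th311} at $v$ and \eqref{th312} at $1-v$ (and vice versa for $\tfrac{1}{2}<v<1$), using the identities you list. The only cosmetic point is the boundary case $v=\tfrac{1}{2}$, where $1-v=\tfrac{1}{2}$ does not fall under part (II); but there $r_1=\hat{r}_1=0$ and \eqref{co31} reduces to an equality, so no issue arises.
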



\begin{theorem}\label{th32}
Let $A,B\in {\mathcal{B}}^{++}(\mathcal{H})$ and positive real numbers $m, m', M, M'$ satisfy either $0 < m'I \le A \le mI < MI \le B \le M'I$ or $0 < m'I \le B \le mI < MI \le A \le M'I$.

  $\rm (I)$ If $0<v\leq \frac{1}{2}$, then
   \begin{equation}
       \label{th321}
        A\nabla_v B\leq 2(1-v)(A\nabla B-A\#B)-r_1(A\#B-2A\#_{\frac{3}{4}}B+B)+\mathrm{K}(\sqrt[4]{h}, 2)^{-\hat{r}_1}A\#_vB,
   \end{equation}

  $\rm (II)$ if $\frac{1}{2}<v< 1$, then
  \begin{equation}
       \label{th322}
         A\nabla_v B\leq 2v(A\nabla B-A\#B)-r_1(A\#B-2A\#_{\frac{1}{4}}B+A)+\mathrm{K}(\sqrt[4]{h}, 2)^{-\hat{r}_1}A\#_vB,
  \end{equation}
where $h=\dfrac{M}{m}$, $r=\min\{v, 1-v\}$, $r_1=\min\{2r, 1-2r\}$ and $\hat{r}_1=\min\{2r_1, 1-2r_1\}$.
Equality holds if and only if $A=B$ and $m=M$.
\end{theorem}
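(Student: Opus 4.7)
The plan is to mirror the proof of Theorem \ref{th31}, but starting from the scalar reverses in Theorem \ref{th22} and keeping careful track of the negative exponent on the Kantorovich factor. First, I would specialize \eqref{th221} to $a=1$, $b=x$ with $x>0$ to obtain, for $0<v\le 1/2$,
\begin{equation*}
(1-v) + vx \le (1-v)(1-\sqrt{x})^2 - r_1\bigl(\sqrt[4]{x}-\sqrt{x}\bigr)^2 + \mathrm{K}\bigl(\sqrt[4]{x},2\bigr)^{-\hat{r}_1} x^v,
\end{equation*}
and similarly specialize \eqref{th222} for $1/2 < v < 1$. These will become the scalar engines to be lifted to the operator setting.

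Second, I would set $X = A^{-1/2} B A^{-1/2}$ and analyze its spectrum. Under the first hypothesis one has $\mathrm{Sp}(X) \subset [h, h']$ with $h = M/m > 1$, while under the second one obtains $\mathrm{Sp}(X) \subset [1/h', 1/h] \subset (0,1)$. In both cases I would invoke the three properties of $\mathrm{K}(t,2)$ recorded in the introduction: it is increasing on $[1,\infty)$, decreasing on $(0,1]$, and $\mathrm{K}(1/t,2) = \mathrm{K}(t,2)$. These together yield $\mathrm{K}\bigl(\sqrt[4]{x},2\bigr) \ge \mathrm{K}\bigl(\sqrt[4]{h},2\bigr)$ for every $x$ in the spectrum of $X$, and since $-\hat{r}_1 \le 0$ this upgrades to the pointwise bound $\mathrm{K}\bigl(\sqrt[4]{x},2\bigr)^{-\hat{r}_1} \le \mathrm{K}\bigl(\sqrt[4]{h},2\bigr)^{-\hat{r}_1}$. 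Hence the scalar reverses continue to hold on $\mathrm{Sp}(X)$ after replacing $\mathrm{K}\bigl(\sqrt[4]{x},2\bigr)$ by the fixed constant $\mathrm{K}\bigl(\sqrt[4]{h},2\bigr)$.

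Third, I would apply the monotonic property of continuous functional calculus stated at the top of Section 3 to the positive operator $X$, giving for $0 < v \le 1/2$
\begin{equation*}
(1-v)I + vX \le (1-v)(I - 2X^{1/2} + X) - r_1(X^{1/2} - 2X^{3/4} + X) + \mathrm{K}\bigl(\sqrt[4]{h},2\bigr)^{-\hat{r}_1} X^v,
\end{equation*}
and then congruence-multiply by $A^{1/2}$ on both sides. Using $A^{1/2} X^\alpha A^{1/2} = A\#_\alpha B$ for $\alpha \in \{1/4, 1/2, 3/4, v\}$ together with $A^{1/2} X A^{1/2} = B$ and $(A+B)/2 = A \nabla B$, the right side collapses precisely to the expression in \eqref{th321}, proving part (I). Part (II) follows in exactly the same way from the specialization of \eqref{th222}, leading to the $A\#_{1/4}B$ term instead.

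The one genuinely delicate point is the handling of the Kantorovich factor: because its exponent is negative, it must be bounded from \emph{above} to preserve the direction of the inequality, and this must be done uniformly in both of the admissible hypotheses on $A$ and $B$. Exactly here the symmetry $\mathrm{K}(1/t,2) = \mathrm{K}(t,2)$ is indispensable, since it lets the same constant $\mathrm{K}\bigl(\sqrt[4]{h},2\bigr)^{-\hat{r}_1}$ serve whether $\mathrm{Sp}(X)$ lies to the right or to the left of $1$. Once this uniform bound is in hand, the rest of the argument is purely computational: functional calculus plus the defining identities for the weighted operator means.
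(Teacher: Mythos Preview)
Your proposal is correct and follows exactly the route the paper itself indicates: the paper's proof of this theorem is a one-liner stating that the result follows from \eqref{th221} and \eqref{th222} by the same ideas as in Theorem~\ref{th31}, and you have spelled out precisely that argument. In particular, your careful treatment of the negative exponent on the Kantorovich factor (bounding it from \emph{above} via the monotonicity and symmetry of $\mathrm{K}(t,2)$) is the only genuinely new wrinkle compared to Theorem~\ref{th31}, and you handle it correctly.
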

 \begin{proof}
By \eqref{th221} and \eqref{th222}, using the same ideas as the proof of Theorem \ref{th31}, we can get this theorem.
\end{proof}

The operator version of \eqref{rem2} can be shown as
\begin{corollary}
Under the same conditions as Theorem \ref{th32}, then
   \begin{equation}
       \label{co32}
        A\nabla B\leq 2R(A\nabla B-A\#B)-r_1(A\nabla B+A\#B-2H_{\frac{1}{4}}(A,B))+\mathrm{K}(\sqrt[4]{h}, 2)^{-\hat{r}_1}H_v(A,B),
    \end{equation}
   where $R=\max\{v, 1-v\}$.
\end{corollary}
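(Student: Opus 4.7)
The plan is to symmetrize Theorem~\ref{th32} in the parameter $v$ by applying it once with parameter $v$ and once with parameter $1-v$, and then average. This is natural because the Heinz operator mean is itself the symmetrization $H_v(A,B)=\tfrac{1}{2}(A\#_vB+A\#_{1-v}B)$, and because $A\nabla_vB+A\nabla_{1-v}B=A+B=2\,A\nabla B$.

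First I would observe that the quantities $r=\min\{v,1-v\}$, $r_1=\min\{2r,1-2r\}$, $\hat r_1=\min\{2r_1,1-2r_1\}$, the ratio $h=M/m$, and the Kantorovich factor $\mathrm{K}(\sqrt[4]{h},2)^{-\hat r_1}$ are all invariant under the substitution $v\leftrightarrow 1-v$; likewise $A\nabla B$ and $A\# B$ are symmetric in $A,B$. Consequently the two applications of Theorem~\ref{th32} share the same ``constants,'' and only the indices of the weighted geometric means (and the coefficient $2v$ vs.\ $2(1-v)$) change.

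Next I would split on cases. For $0<v\le\tfrac12$, apply part (I) of Theorem~\ref{th32} to parameter $v$ to get \eqref{th321}, and apply part (II) to parameter $1-v\in[\tfrac12,1)$, which yields
\[
A\nabla_{1-v}B\le 2(1-v)(A\nabla B-A\#B)-r_1(A\#B-2A\#_{\frac14}B+A)+\mathrm{K}(\sqrt[4]{h},2)^{-\hat r_1}A\#_{1-v}B.
\]
Adding these two inequalities, the left-hand side becomes $2A\nabla B$, the geometric-mean terms on the right combine via $A\#_vB+A\#_{1-v}B=2H_v(A,B)$, and the inner bracket becomes
\[
(A\#B-2A\#_{\frac34}B+B)+(A\#B-2A\#_{\frac14}B+A)=2A\#B+2A\nabla B-4H_{\frac14}(A,B),
\]
while the coefficient $2(1-v)+2(1-v)=4(1-v)=4R$ since $R=\max\{v,1-v\}=1-v$. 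Dividing through by $2$ delivers \eqref{co32} exactly. For $\tfrac12<v<1$ I would do the mirror image: apply part (II) to $v$ and part (I) to $1-v\in(0,\tfrac12)$; now the coefficient sums to $2v+2v=4v=4R$, and the bracket combines in the same way, so the same inequality \eqref{co32} falls out.

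The proof is essentially bookkeeping, so the main obstacle is just to verify carefully that (a) the constants $r_1,\hat r_1,h$ really are $v\leftrightarrow 1-v$ invariant, (b) when one substitutes $1-v$ for $v$ in the case~(II) inequality \eqref{th322} the explicit index ``$\tfrac14$'' stays at $\tfrac14$ (and similarly $\tfrac34$ in case~(I)), so that the pairing $A\#_{\frac14}B+A\#_{\frac34}B=2H_{\frac14}(A,B)$ arises cleanly, and (c) the two numerical coefficients on $A\nabla B-A\#B$ always add up to $4R$ in both cases. Equality in \eqref{co32} holds precisely when it holds in both of the added inequalities of Theorem~\ref{th32}, i.e.\ when $A=B$ and $m=M$.
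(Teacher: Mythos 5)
Your symmetrization argument is correct and is exactly the derivation the paper intends: the corollary is stated as the operator version of \eqref{rem2}, which itself is obtained from Theorem \ref{th22} by adding the $v$ and $1-v$ instances and halving, and your bookkeeping (invariance of $r_1,\hat r_1,h$ under $v\leftrightarrow 1-v$, the pairing $A\#_{\frac14}B+A\#_{\frac34}B=2H_{\frac14}(A,B)$, and the coefficient summing to $4R$) all checks out. The only boundary point, $v=\tfrac12$, is harmless since there $r_1=\hat r_1=0$ and \eqref{co32} reduces to an identity.
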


\begin{remark}
\eqref{co31} and \eqref{co32} are sharper than $(3.4)$ in \cite{Kittaneh2}.

If $0<v\leq \frac{1}{2}$, combining  \eqref{th311} with \eqref{th321}, then
\begin{align*}
0&\le A\#_vB\\
&\le 2v(A\nabla B-A\#B)+A\#_vB\\
&\le 2v(A\nabla B-A\#B)+r_1(A\#B-2A\#_{\frac{1}{4}}B+A)+A\#_vB\\
&\le 2v(A\nabla B-A\#B)+r_1(A\#B-2A\#_{\frac{1}{4}}B+A)+\mathrm{K}(\sqrt[4]{h}, 2)^{\hat{r}_1}A\#_vB\\
&\le A\nabla_v B\\
&\le 2(1-v)(A\nabla B-A\#B)-r_1(A\#B-2A\#_{\frac{3}{4}}B+B)+\mathrm{K}(\sqrt[4]{h}, 2)^{-\hat{r}_1}A\#_vB\\
&\le 2(1-v)(A\nabla B-A\#B)-r_1(A\#B-2A\#_{\frac{3}{4}}B+B)+A\#_vB\\
&\le 2(1-v)(A\nabla B-A\#B)+A\#_vB.
\end{align*}
By the properties of Kantorovich constant, \eqref{th311} and \eqref{th312} are better than \eqref{p5} and \eqref{p7}, respectively.

If $\frac{1}{2}<v< 1$, combining \eqref{th312} with \eqref{th322}, then
\begin{align*}
0&\le A\#_vB\\
&\le 2(1-v)(A\nabla B-A\#B)+A\#_vB\\
&\le 2(1-v)(A\nabla B-A\#B)+r_1(A\#B-2A\#_{\frac{3}{4}}B+B)+A\#_vB\\
&\le 2(1-v)(A\nabla B-A\#B)+r_1(A\#B-2A\#_{\frac{3}{4}}B+B)+\mathrm{K}(\sqrt[4]{h}, 2)^{\hat{r}_1}A\#_vB\\
&\le A\nabla_v B\\
&\le 2v(A\nabla B-A\#B)-r_1(A\#B-2A\#_{\frac{1}{4}}B+A)+\mathrm{K}(\sqrt[4]{h}, 2)^{-\hat{r}_1}A\#_vB\\
&\le 2v(A\nabla B-A\#B)-r_1(A\#B-2A\#_{\frac{1}{4}}B+A)+A\#_vB\\
&\le 2v(A\nabla B-A\#B)+A\#_vB.
\end{align*}
\eqref{th321} and \eqref{th322} are better than \eqref{p6} and \eqref{p8}, respectively.
\end{remark}

\section{Hilbert-Schmidt norm inequalities}
In this section, we present the improved Young and Heinz inequalities for the Hilbert-Schmidt norm.

Let $M_n(\mathbb{C})$ denote the algebra of all
$n\times n$ complex matrices. The Hilbert-Schmidt norm of $A\in M_n(\mathbb{C})$ is denoted by
$\left\| A\right\|_F^2$. It is well-known that the Hilbert-Schmidt norm is unitarily invariant in
the sense that $\|UAV\|_F^2=\|A\|_F^2$ for all unitary matrices $U,V \in M_n(\mathbb{C})$ (See \cite[p.341-342]{Horn}).
The spectrum of $A$ and $B$ are denoted by $\mathrm{Sp}(A)=\{\lambda _1 ,\lambda _2 , \cdots ,\lambda _n\}$
and $\mathrm{Sp}(B)=\{\nu _1 ,\nu _2 , \cdots ,\nu _n\}$, respectively.
The Schur product (Hadamard product) of two matrices $A, B\in M_n(\mathbb{C})$ is the entrywise product and denoted by $A\circ B$.

 Hirzallah and Kittaneh \cite{Hirzallah} and Kittaneh and Manasrah \cite{Kittaneh2} have showed that if $A,B,X \in M_n(\mathbb{C})$ with $A$ and $B$ positive semidefinite matrices and $v \in [0,1]$, then
\begin{equation}
\label{eq41}
r^2\left\| {AX - XB} \right\|_F^2\le \left\| {(1 - v )AX + v XB} \right\|_F^2-\left\| {A^{1 - v }XB^v
} \right\|_F^2 \le  R^2\left\| {AX - XB} \right\|_F^2 ,
\end{equation}
where $r=\min\{v,1-v\}$, $R=\max\{v,1-v\}$.


Applying Corollary \ref{co21} and \ref{co22}, we derive two theorems which improve \eqref{eq41}.

\begin{theorem}
Suppose $A,B,X \in M_n(\mathbb{C})$ such that $A$ and $B$ are
two positive definite matrices.
 Let\[\mathrm{\underline{K}}=\min\left\{\mathrm{K}(\left({\lambda _i }/{\nu _j }\right)^{\frac{1}{2}}, 2),i,j=1,2,\cdots, n\right\}.\]

  $\rm (I)$ If $0<v\leq \frac{1}{2}$, then
  \begin{equation}
    \label{th411}
    \begin{split}
       \left\| {(1 - v )AX + v XB} \right\|_F^2&-v^2\left\| {AX - XB} \right\|_F^2 \\
       &\ge r_1\left\| {A^{\frac{1}{2}}XB^{\frac{1}{2}} - AX} \right\|_F^2+\mathrm{\underline{K}}^{\hat{r}_1}\left\|
       {A^{1 - v }XB^v } \right\|_F^2,
    \end{split}
  \end{equation}

 $\rm (II)$ if $\frac{1}{2}<v< 1$, then
 \begin{equation}
    \label{th412}
    \begin{split}
     \left\| {(1 - v )AX + v XB} \right\|_F^2& -(1-v)^2\left\| {AX - XB} \right\|_F^2\\
     &\ge r_1\left\| A^{\frac{1}{2}}XB^{\frac{1}{2}} - XB \right\|_F^2+\mathrm{\underline{K}}^{\hat{r}_1}\left\|
     {A^{1 - v }XB^v } \right\|_F^2,
  \end{split}
 \end{equation}
where $r=\min\{v, 1-v\}$, $r_1=\min\{2r, 1-2r\}$ and $\hat{r}_1=\min\{2r_1, 1-2r_1\}$.
\end{theorem}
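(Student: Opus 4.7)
The plan is to reduce the matrix inequalities entrywise to the scalar inequalities in Corollary \ref{co21} by simultaneous spectral diagonalization, exactly as Hirzallah--Kittaneh and Kittaneh--Manasrah do for \eqref{eq41}. Since $A$ and $B$ are positive definite, write the spectral decompositions $A=U\,\mathrm{diag}(\lambda_1,\dots,\lambda_n)\,U^*$ and $B=V\,\mathrm{diag}(\nu_1,\dots,\nu_n)\,V^*$ with $U,V$ unitary, and set $Y=U^*XV=[y_{ij}]$. Using the fact that $\|\cdot\|_F^2$ is unitarily invariant, every term of \eqref{th411} and \eqref{th412} diagonalizes to a weighted sum of the $|y_{ij}|^2$; in particular
\begin{equation*}
\|(1-v)AX+vXB\|_F^2=\sum_{i,j}\bigl((1-v)\lambda_i+v\nu_j\bigr)^2|y_{ij}|^2,\qquad \|AX-XB\|_F^2=\sum_{i,j}(\lambda_i-\nu_j)^2|y_{ij}|^2,
\end{equation*}
and analogously $\|A^{1/2}XB^{1/2}-AX\|_F^2=\sum_{i,j}(\sqrt{\lambda_i\nu_j}-\lambda_i)^2|y_{ij}|^2$, $\|A^{1/2}XB^{1/2}-XB\|_F^2=\sum_{i,j}(\sqrt{\lambda_i\nu_j}-\nu_j)^2|y_{ij}|^2$, and $\|A^{1-v}XB^v\|_F^2=\sum_{i,j}(\lambda_i^{1-v}\nu_j^v)^2|y_{ij}|^2$.

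Next I would apply Corollary \ref{co21} with $a=\lambda_i>0$ and $b=\nu_j>0$ for each index pair $(i,j)$ separately. In case (I), $0<v\le\tfrac12$, this gives
\begin{equation*}
\bigl((1-v)\lambda_i+v\nu_j\bigr)^2\ge v^2(\lambda_i-\nu_j)^2+r_1(\sqrt{\lambda_i\nu_j}-\lambda_i)^2+\mathrm{K}\bigl((\nu_j/\lambda_i)^{1/2},2\bigr)^{\hat r_1}\bigl(\lambda_i^{1-v}\nu_j^v\bigr)^2.
\end{equation*}
Multiplying by $|y_{ij}|^2$ and summing over $i,j$ already reproduces the first two terms on the right of \eqref{th411} verbatim.

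The one remaining point is to uniformize the Kantorovich factor. Because $\hat r_1\ge 0$ and $\mathrm{K}(t,2)\ge 1$ together with the symmetry $\mathrm{K}(t,2)=\mathrm{K}(1/t,2)$, replacing $\mathrm{K}\bigl((\nu_j/\lambda_i)^{1/2},2\bigr)$ by its minimum $\underline{\mathrm{K}}=\min_{i,j}\mathrm{K}\bigl((\lambda_i/\nu_j)^{1/2},2\bigr)$ only weakens the inequality, so
\begin{equation*}
\sum_{i,j}\mathrm{K}\bigl((\nu_j/\lambda_i)^{1/2},2\bigr)^{\hat r_1}\bigl(\lambda_i^{1-v}\nu_j^v\bigr)^2|y_{ij}|^2\ge\underline{\mathrm{K}}^{\hat r_1}\|A^{1-v}XB^v\|_F^2.
\end{equation*}
This finishes (I), and (II) is handled in the same way using the second half of Corollary \ref{co21}, with the term $(\sqrt{\lambda_i\nu_j}-\lambda_i)^2$ replaced by $(\sqrt{\lambda_i\nu_j}-\nu_j)^2$. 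The main technical step is really just the bookkeeping of the spectral reduction; the only genuine subtlety is noticing that $\hat r_1\ge 0$ and $\mathrm{K}\ge 1$ are precisely what is needed to pull the pointwise Kantorovich factor out as a global lower bound $\underline{\mathrm{K}}^{\hat r_1}$.
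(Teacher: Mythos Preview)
Your proposal is correct and follows essentially the same approach as the paper: spectral diagonalization of $A$ and $B$, rewriting every Hilbert--Schmidt norm as a double sum in the entries of $Y=U^*XV$, applying Corollary~\ref{co21} entrywise with $a=\lambda_i$, $b=\nu_j$, and then replacing the pointwise Kantorovich factor by its global minimum $\underline{\mathrm{K}}$. Your explicit observation that the symmetry $\mathrm{K}(t,2)=\mathrm{K}(1/t,2)$ reconciles $\mathrm{K}((\nu_j/\lambda_i)^{1/2},2)$ with the definition of $\underline{\mathrm{K}}$, and that $\hat r_1\ge 0$ is what allows the minimum to be pulled out, makes the one step the paper leaves implicit fully transparent.
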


\begin{proof} Since $A$ and $B$ are positive definite, it follows by the
spectral theorem that there exist unitary matrices $U,V \in M_n(\mathbb{C})$ such that
\[
A = U\Lambda _1 U^ * ,
B = V\Lambda _2 V^ * ,
\]

\noindent
where $\Lambda _1 = \mathrm{diag}(\lambda _1 ,\lambda _2 , \cdots ,\lambda _n
),$ $\Lambda _2 = \mathrm{diag}(\nu _1 ,\nu _2 , \cdots ,\nu _n ),$ $\lambda _i , \nu _i >
0,$ $i = 1,2, \cdots ,n.$

Let $Y = U^ * XV = [y_{ij} ]$, then
\begin{align*}
(1 - v )AX + v XB &= U((1 - v )\Lambda _1 Y + v Y\Lambda _2 )V^ *\\
&=U[((1 - v )\lambda _i + v \nu _j )\circ Y  ]V^ * ,
\end{align*}
$$AX - XB = U[(\lambda _i - \nu _j )\circ Y  ]V^ * ,$$
$$A^{\frac{1}{2}}XB^{\frac{1}{2}} - AX=U[((\lambda _i \nu _j)^{\frac{1}{2}}- \lambda _i)\circ Y  ]V^ *, $$
$$A^{\frac{1}{2}}XB^{\frac{1}{2}} - XB=U[((\lambda _i \nu _j)^{\frac{1}{2}}-  \nu _j)\circ Y  ]V^ * $$
and
$$A^{1 - v }XB^v =
U((\lambda _i^{1 - v } \nu _j^v )\circ Y )V^ * .$$

If $0<v\leq \frac{1}{2}$, utilizing the inequality \eqref{co211} and the unitary invariance of the Hilbert-Schmidt norm, we have
\begin{align*}
\left\| {(1 - v )AX + v XB} \right\|_F^2&-v^2\left\| {AX - XB} \right\|_F^2 \\
&= \sum\limits_{i,j = 1}^n {((1- v )\lambda _i + v \nu _j )^2\vert y_{ij} \vert ^2}-v^2\sum\limits_{i,j = 1}^n(\lambda _i - \nu _j )^2\vert y_{ij}\vert^2 \\
&= \sum\limits_{i,j = 1}^n \left[{((1- v )\lambda _i + v \nu _j )^2\vert y_{ij} \vert ^2}-v^2(\lambda _i - \nu _j )^2\vert y_{ij}\vert^2\right] \\
&\ge\sum\limits_{i,j = 1}^n\left[ r_1((\lambda _i \nu _j)^{\frac{1}{2}}- \lambda _i)^2\vert y_{ij} \vert ^2+\mathrm{K}\left(\left({\lambda _i }/{\nu _j }\right)^{\frac{1}{2}}, 2\right)^{\hat{r}_1}\left(\lambda _i^{1 - v } \nu _j^v\right)^2\vert y_{ij} \vert ^2\right]\\
&\ge r_1\sum\limits_{i,j = 1}^n ((\lambda _i \nu _j)^{\frac{1}{2}}- \lambda _i)^2\vert y_{ij} \vert ^2+\mathrm{\underline{K}}^{\hat{r}_1}\sum\limits_{i,j = 1}^n\left(\lambda _i^{1 - v } \nu _j^v\right)^2\vert y_{ij} \vert ^2\\
&=r_1\left\| {A^{\frac{1}{2}}XB^{\frac{1}{2}} - AX} \right\|_F^2+\mathrm{\underline{K}}^{\hat{r}_1}\left\|
       {A^{1 - v }XB^v } \right\|_F^2.
 \end{align*}
Similarly, if $\frac{1}{2}<v< 1$, using the inequality \eqref{co212}, we can derive \eqref{th412}.
\end{proof}

\begin{theorem}
Suppose $A,B,X \in M_n(\mathbb{C})$ such that $A$ and $B$ are
two positive definite matrices. Let
\[\mathrm{\underline{K}}=\min\left\{\mathrm{K}(\left({\lambda _i }/{\nu _j }\right)^{\frac{1}{2}}, 2),i,j=1,2,\cdots, n\right\}.\]

$(I)$ If $0<v\leq \frac{1}{2}$,
  \begin{equation}
    \label{th421}
    \begin{split}
       \left\| {(1 - v )AX+ v XB} \right\|_F^2&-(1-v)^2\left\| {AX - XB} \right\|_F^2 \\
       &\le\mathrm{\underline{K}}^{-\hat{r}_1}\left\|
       {A^{1 - v }XB^v } \right\|_F^2- r_1\left\| {A^{\frac{1}{2}}XB^{\frac{1}{2}} - XB} \right\|_F^2,
    \end{split}
  \end{equation}

 $\rm (II)$ if $\frac{1}{2}<v< 1$, then
 \begin{equation}
    \label{th422}
    \begin{split}
     \left\| {(1 - v )AX + v XB} \right\|_F^2&- v^2\left\| {AX - XB} \right\|_F^2\\
     &\le\mathrm{\underline{K}}^{-\hat{r}_1}\left\|
     {A^{1 - v }XB^v } \right\|_F^2-r_1\left\| A^{\frac{1}{2}}XB^{\frac{1}{2}} - AX \right\|_F^2,
    \end{split}
 \end{equation}
where $r=\min\{v, 1-v\}$, $r_1=\min\{2r, 1-2r\}$ and $\hat{r}_1=\min\{2r_1, 1-2r_1\}$.
\end{theorem}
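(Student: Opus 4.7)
The plan is to mirror the proof of the preceding theorem, but start from the scalar reverse inequalities in Corollary \ref{co22} rather than Corollary \ref{co21}. By the spectral theorem, I would first write $A = U\Lambda_1 U^*$ and $B = V\Lambda_2 V^*$, where $\Lambda_1 = \mathrm{diag}(\lambda_1,\ldots,\lambda_n)$ and $\Lambda_2 = \mathrm{diag}(\nu_1,\ldots,\nu_n)$ have positive diagonal entries, and set $Y = U^*XV = [y_{ij}]$. With this substitution, each of the five matrix expressions appearing in \eqref{th421} becomes a unitary conjugate of a Schur product with $Y$, in the same way as in the proof of the previous theorem, so by the unitary invariance of the Frobenius norm every matrix norm in \eqref{th421} collapses into a double sum of the form $\sum_{i,j}(\cdot)^2|y_{ij}|^2$.

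Next, I would apply Corollary \ref{co22}(I) entrywise with $a = \lambda_i$ and $b = \nu_j$. Here $h = \nu_j/\lambda_i$, but since $\mathrm{K}(t,2) = \mathrm{K}(1/t,2)$ we may replace $\mathrm{K}(\sqrt{h},2)$ by $\mathrm{K}((\lambda_i/\nu_j)^{1/2},2)$, which matches the quantity used to define $\underline{K}$. Multiplying the resulting pointwise inequality by $|y_{ij}|^2$, summing over $i,j$, and recognizing each subsum as a Hilbert-Schmidt norm gives the estimate \eqref{th421} except that the geometric-mean term carries the index-dependent factor $\mathrm{K}((\lambda_i/\nu_j)^{1/2},2)^{-\hat{r}_1}$ rather than the uniform constant $\underline{K}^{-\hat{r}_1}$.

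The closing step is to replace this index-dependent factor by $\underline{K}^{-\hat{r}_1}$, and this is the step that requires the most care since we are proving an upper bound. Because $\hat{r}_1 \geq 0$ and $\mathrm{K}(\cdot,2) \geq 1$, the map $K \mapsto K^{-\hat{r}_1}$ is monotone nonincreasing, so for every pair $(i,j)$ we have $\mathrm{K}((\lambda_i/\nu_j)^{1/2},2)^{-\hat{r}_1} \leq \underline{K}^{-\hat{r}_1}$; pulling this uniform constant out of the sum produces \eqref{th421}. Part (II) follows by an identical argument, starting instead from Corollary \ref{co22}(II), which swaps the roles of $(1-v)$ and $v$ and replaces $(\lambda_i\nu_j)^{1/2}-\nu_j$ by $(\lambda_i\nu_j)^{1/2}-\lambda_i$. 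I do not anticipate any serious obstacle; the only delicate point is verifying the direction of the monotonicity at the final step, which works out precisely because $\hat{r}_1 \geq 0$ and $\underline{K}$ is the \emph{minimum} (not the maximum) of the pointwise Kantorovich constants.
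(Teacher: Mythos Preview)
Your proposal is correct and follows exactly the approach indicated in the paper: the paper's own proof is a one-line remark that the result follows by combining the ideas from the proof of Theorem~4.1 with Corollary~\ref{co22}, which is precisely the spectral-decomposition-plus-entrywise-estimate argument you have spelled out. Your observation about the direction of the monotonicity step (that $\underline{K}^{-\hat r_1}$ dominates each $\mathrm{K}((\lambda_i/\nu_j)^{1/2},2)^{-\hat r_1}$ because $\underline{K}$ is the \emph{minimum} and $\hat r_1\ge 0$) is exactly the point one must check, and it is correct.
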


\begin{proof}
By using the same ideas as the prove of Theorem 4.1 and Corollary \ref{co22},
we can obtain the required results.
\end{proof}

\begin{remark} If $0<v\leq \frac{1}{2}$, combining  \eqref{th411}and \eqref{th421}
with \eqref{eq41}, we obtain
  \begin{align*}
0\le&  \left\| {A^{1 - v }XB^v } \right\|_F^2\\
\le&  v^2\left\| {AX - XB} \right\|_F^2 +\left\|
       {A^{1 - v }XB^v } \right\|_F^2\\
\le&  v^2\left\| {AX - XB} \right\|_F^2 +\mathrm{\underline{K}}^{\hat{r}_1}\left\|
       {A^{1 - v }XB^v } \right\|_F^2\\
\le&  v^2\left\| {AX - XB} \right\|_F^2 +r_1\left\| {A^{\frac{1}{2}}XB^{\frac{1}{2}} - AX} \right\|_F^2+\mathrm{\underline{K}}^{\hat{r}_1}\left\|
       {A^{1 - v }XB^v } \right\|_F^2\\
\le& \left\| {(1 - v )AX+ v XB} \right\|_F^2\\
\le& (1-v)^2\left\| {AX - XB} \right\|_F^2- r_1\left\| {A^{\frac{1}{2}}XB^{\frac{1}{2}} - XB} \right\|_F^2 +\mathrm{\underline{K}}^{-\hat{r}_1}\left\|
       {A^{1 - v }XB^v } \right\|_F^2\\
\le& (1-v)^2\left\| {AX - XB} \right\|_F^2+\mathrm{\underline{K}}^{-\hat{r}_1}\left\|
       {A^{1 - v }XB^v } \right\|_F^2\\
\le& (1-v)^2\left\| {AX - XB} \right\|_F^2+\left\|
       {A^{1 - v }XB^v } \right\|_F^2,
  \end{align*}
  if $\frac{1}{2}<v< 1$, combining \eqref{th412}and \eqref{th422} with \eqref{eq41},
  we can obtain similar results,
\begin{align*}
0\le&  \left\| {A^{1 - v }XB^v } \right\|_F^2\\
\le&  (1-v)^2\left\| {AX - XB} \right\|_F^2 +\left\|
       {A^{1 - v }XB^v } \right\|_F^2\\
\le&  (1-v)^2\left\| {AX - XB} \right\|_F^2 +\mathrm{\underline{K}}^{\hat{r}_1}\left\|
       {A^{1 - v }XB^v } \right\|_F^2\\
\le&  (1-v)^2\left\| {AX - XB} \right\|_F^2 +r_1\left\| {A^{\frac{1}{2}}XB^{\frac{1}{2}} - XB} \right\|_F^2+\mathrm{\underline{K}}^{\hat{r}_1}\left\|
       {A^{1 - v }XB^v } \right\|_F^2\\
\le& \left\| {(1 - v )AX+ v XB} \right\|_F^2\\
\le& v^2\left\| {AX - XB} \right\|_F^2- r_1\left\| {A^{\frac{1}{2}}XB^{\frac{1}{2}} - AX} \right\|_F^2 +\mathrm{\underline{K}}^{-\hat{r}_1}\left\|
       {A^{1 - v }XB^v } \right\|_F^2\\
\le& v^2\left\| {AX - XB} \right\|_F^2+\mathrm{\underline{K}}^{-\hat{r}_1}\left\|
       {A^{1 - v }XB^v } \right\|_F^2\\
 \le& v^2\left\| {AX - XB} \right\|_F^2+\left\|
       {A^{1 - v }XB^v } \right\|_F^2,
  \end{align*}
which are improvements of $\left\| A^{1-v}XB^v\right\|_F^2\le\left\| {(1 - v )AX+ v XB} \right\|_F^2$ (See \cite{Bhatia1, Kosaki}).
\end{remark}

\end{document}